\numberwithin{equation}{section}
\newtheorem{theorem}{Theorem}[section]
\newtheorem{proposition}[theorem]{Proposition}
\theoremstyle{definition}
\newtheorem{definition}[theorem]{Definition}
\newenvironment{remark}[1][Remark.]{\begin{trivlist}
\item[\hskip \labelsep {\bfseries #1}]  }{ \end{trivlist}}
\newcommand{\Id}{\mathbbmss{1}}
\newcommand{\rmd}{\textnormal{d}}
\DeclareMathOperator{\Vect}{Vect}
\DeclareMathOperator{\Span}{Span}
\font\black=cmbx10 \font\sblack=cmbx7 \font\ssblack=cmbx5 \font\blackital=cmmib10  \skewchar\blackital='177
\font\sblackital=cmmib7 \skewchar\sblackital='177 \font\ssblackital=cmmib5 \skewchar\ssblackital='177
\font\sanss=cmss10 \font\ssanss=cmss8 
\font\sssanss=cmss8 scaled 600 \font\blackboard=msbm10 \font\sblackboard=msbm7 \font\ssblackboard=msbm5
\font\caligr=eusm10 \font\scaligr=eusm7 \font\sscaligr=eusm5  \font\fraktur=eufm10
\font\sfraktur=eufm7 \font\ssfraktur=eufm5 
\font\bsymb=cmsy10 scaled\magstep2
\def\all#1{\setbox0=\hbox{\lower1.5pt\hbox{\bsymb
       \char"38}}\setbox1=\hbox{$_{#1}$} \box0\lower2pt\box1\;}
\def\exi#1{\setbox0=\hbox{\lower1.5pt\hbox{\bsymb \char"39}}
       \setbox1=\hbox{$_{#1}$} \box0\lower2pt\box1\;}
\def\tx#1{{\fam0\relax#1}}
\def\sss#1{{\fam\ssfam\relax#1}}
\def\hpb#1{\setbox0=\hbox{${#1}$}
    \copy0 \kern-\wd0 \kern.2pt \box0}
\def\vpb#1{\setbox0=\hbox{${#1}$}
    \copy0 \kern-\wd0 \raise.08pt \box0}
\def\pmb#1{\setbox0\hbox{${#1}$} \copy0 \kern-\wd0 \kern.2pt \box0}
\def\pmbb#1{\setbox0\hbox{${#1}$} \copy0 \kern-\wd0
      \kern.2pt \copy0 \kern-\wd0 \kern.2pt \box0}
\def\pmbbb#1{\setbox0\hbox{${#1}$} \copy0 \kern-\wd0
      \kern.2pt \copy0 \kern-\wd0 \kern.2pt
    \copy0 \kern-\wd0 \kern.2pt \box0}
\def\pmxb#1{\setbox0\hbox{${#1}$} \copy0 \kern-\wd0
      \kern.2pt \copy0 \kern-\wd0 \kern.2pt
      \copy0 \kern-\wd0 \kern.2pt \copy0 \kern-\wd0 \kern.2pt \box0}
\def\pmxbb#1{\setbox0\hbox{${#1}$} \copy0 \kern-\wd0 \kern.2pt
      \copy0 \kern-\wd0 \kern.2pt
      \copy0 \kern-\wd0 \kern.2pt \copy0 \kern-\wd0 \kern.2pt
      \copy0 \kern-\wd0 \kern.2pt \box0}
\mathchardef\za="710B  
\mathchardef\zb="710C  
\mathchardef\zg="710D  
\mathchardef\zd="710E  
\mathchardef\zve="710F 
\mathchardef\zz="7110  
\mathchardef\zh="7111  
\mathchardef\zvy="7112 
\mathchardef\zi="7113  
\mathchardef\zk="7114  
\mathchardef\zl="7115  
\mathchardef\zm="7116  
\mathchardef\zn="7117  
\mathchardef\zx="7118  
\mathchardef\zp="7119  
\mathchardef\zr="711A  
\mathchardef\zs="711B  
\mathchardef\zt="711C  
\mathchardef\zu="711D  
\mathchardef\zvf="711E 
\mathchardef\zq="711F  
\mathchardef\zc="7120  
\mathchardef\zw="7121  
\mathchardef\ze="7122  
\mathchardef\zy="7123  
\mathchardef\zf="7124  
\mathchardef\zvr="7125 
\mathchardef\zvs="7126 
\mathchardef\zf="7127  
\mathchardef\zG="7000  
\mathchardef\zD="7001  
\mathchardef\zY="7002  
\mathchardef\zL="7003  
\mathchardef\zX="7004  
\mathchardef\zP="7005  
\mathchardef\zS="7006  
\mathchardef\zU="7007  
\mathchardef\zF="7008  
\mathchardef\zW="700A  
\mathchardef\zC="7009  
\newcommand{\be}{\begin{equation}}
\newcommand{\ee}{\end{equation}}
\newcommand{\bea}{\begin{eqnarray}}
\newcommand{\eea}{\end{eqnarray}}
\def\*{{\textstyle *}}
\newcommand{\R}{{\mathbb R}}
\newcommand{\s}{{\textstyle *}}
\def\Sec{\sss{Sec}}
\def\Vect{\sss{Vect}}
\def\sH{{\sss H}}
\def\sV{{\sss V}}
\def\xi{\tx{i}}
\def\s*{{\scriptstyle *}}
\newcommand{\beas}{\begin{eqnarray*}}
\newcommand{\eeas}{\end{eqnarray*}}
\title{Carrollian $\R^\times$-bundles III: The Hodge Star and Hodge--de Rham Laplacians} 
\author{Andrew James Bruce } 
   \email{andrewjamesbruce@googlemail.com}
   \date{\today}
\begin{document}
 \maketitle
\vspace{-20pt}
\begin{abstract}{\noindent Carrollian $\R^\times$-bundles ($\R^\times := \R\setminus \{0\}$) offer a novel perspective on intrinsic Carrollian geometry using the powerful tools of principal bundles. Given a choice of principal connection, a canonical Lorentzian metric exists on the total space. This metric enables the development of Hodge theory on a Carrollian $\R^\times$-bundle; specifically, the Hodge star operator and Hodge--de Rham Laplacian are constructed. These constructions are obstructed on a Carrollian manifold due to the degenerate metric. The framework of Carrollian $\R^\times$-bundles bridges the gap between Carrollian geometry and (pseudo)-Riemannian geometry.  As an example, the question of the Hodge--de Rham Laplacian on the event horizon of a Schwarzschild black hole is addressed. A Carrollian version of electromagnetism is also proposed. }
\\
\noindent {\Small \textbf{Keywords:} Carrollian geometry;~Hodge theory;~Hodge--de Rham Laplacian;~Black hole horizons}\\
\noindent {\small \textbf{MSC 2020:} 53C50;~58A14;~83C99}\\

\end{abstract}
\tableofcontents
\section{Introduction}
The intrinsic definition of a \emph{Carrollian manifold} was first given by Duval et al.\ \cite{Duval:2014a,Duval:2014b, Duval:2014} as a smooth manifold equipped with a degenerate metric whose kernel is generated by a complete, nowhere vanishing vector field. Earlier works that inspired the intrinsic definition include Lévy-Leblond \cite{Lévy-Leblond:1965}, Sen Gupta \cite{SenGupta:1966}, and Henneaux \cite{Henneaux:1979}. Natural examples of Carrollian manifolds include null hypersurfaces and the event horizon of a Schwarzschild black hole. For a review of Carrollian physics the reader may consult Bagchi et al.\ \cite{Bagchi:2025}, and references therein. The theory of differential forms, and in particular the  Hodge--de Rham Laplacian, on a Carrollian manifold is largely missing from the current literature. The reason for this lies in the degeneracy of the metric, preventing a direct analogue of the Hodge star operator. Critically, the Hodge star operator requires a (non-degenerate) metric and an orientation to be defined.  Given the importance of the Hodge--de Rham Laplacian in relativistic theories, for example, the study of gravitational waves, analysing solutions of the Einstein field equations, formulating electromagnetism, etc., it is desirable to get a handle on the Carrollian analogue of the Laplacian. \par 
This note continues the author's exploration of \emph{Carrollian $\R^\times$-bundles} ($\R^\times = \R \setminus \{0\}$) as introduced in \cite{Bruce:2025}. A \emph{Carrollian $\R^\times$-bundle} is a principal $\R^\times$-bundle $\pi :P\rightarrow M$ equipped with a degenerate metric $g$ such that $\ker(g) := \left \{X \in \Vect(P)  ~|~  g(X,-)=0 \right \} =  \Sec(\sV P)$, where $\sV P$ is the vertical bundle. Note that, as we have a principal bundle, the vertical bundle is trivial even if the principal bundle itself is non-trivial.  Such bundles offer a novel perspective to describe aspects of intrinsic Carrollian geometry. Any $\R^\times$-bundle can be equipped with a principal connection. For the case of a Carrollian $\R^\times$-bundles, fixing a connection canonically defines a Lorentzian metric on the total space. The theory closely mimics Kaluza--Klein geometry, though the extra dimension is not compact. None-the-less, we can apply the standard tools of (pseudo-)Rienannian geometry to Carrollian geometry, albeit at the cost of introducing a principal connection.\par 
Recall that a \emph{Carrollian bundle} is a triple $(E, g, \kappa)$, where $\pi :E\rightarrow M$ is a fibre bundle with typical fibre $\R$, $g$ is a degenerate metric on $E$ of signature $(1,1, \cdots, 1, 0)$ (the zero is in the fibre position), and $\kappa \in \Vect(E)$ is a complete and non-singular Killing vector field, such that $\ker(g) = \Span \{\kappa\}$. The selection of a section of $E$ induces a fibre-preserving diffeomorphism that can be composed with the smooth inclusion $P \hookrightarrow L$ (see \cite{Bruce:2025} for details)
$$P \hookrightarrow L \stackrel{\sim}{\rightarrow} E\,,$$
where $L$ is a line bundle (over $M$), and $P = L^\times = L\setminus \{0_M\}$ is the associated $\R^\times$-principal bundle. An important result is \cite[Theorem 3.20]{Bruce:2025}, which states that one can build a Carrollian $\R^\times$-bundle structure on $P$ from a Carrollian bundle, albeit non-canonically. Thus, it is generally sufficient to consider $L$ (the linearisation of $E$ about a section) and the principal bundle $P$ while discussing intrinsic Carrollian geometry. However, care is needed when passing back from the principle bundle to the line bundle as complications can arise with the smooth inclusion; locally this means ``reinstating'' $t =0$.\par
In this note, we use the canonical Lorentzian structure to build the Hodge operators, so the Hodge star operator, the de Rham codifferential, and the Hodge--de Rham Laplacian on a Carrollian $\R^\times$-bundle. While the constructions themselves are standard, their application to Carrollian $\R^\times$-bundles yields new novel geometric insight. For on overview of the standard constructions, the reader may  \cite[Chapter 6]{Fecko:2006}, \cite[Chapter 14]{Frankel:2011} or \cite[Section 7.9]{Nakahara:2003}, for example. One of the main observations of this note is that the Hodge operators are equivariant with respect to the $\R^\times$-action; this is due to the precise form of the canonical Lorentzian metric and the assumption that the degenerate metric is constant along the degenerate direction. For the specific example of the event horizon of a Schwarzschild black hole equipped with the trivial connection, we see that the Hodge--de Rham Laplacian can be extended to include $t=0$, and so gives a well-defined Laplacian on differential forms on the event horizon for all $t$, even though the Hodge star operator is ill-defined there, see Subsection \ref{sebsec:BHHorizon} for details including the required regularity condition on differential forms. The event horizon example is generic for trivial Carrollian $\R^\times$-bundles with a two-dimensional base manifold that are equipped with the trivial connection; this is the most physically interesting scenario from the perspective of general relativity, as it covers important cases such as null hypersurfaces, assuming they have a trivial bundle structure. \par 
As another application of the Hodge star operator, we propose a Carrollian version of electromagnetism that mimics the standard geometric construction of Lorentzian electromagnetism. Importantly, the \emph{Carrollian Maxwell equations} \eqref{eqn:CarrollianEM}  extend to cover $t=0$, even though the Hodge star operator is not well-defined on $L$.  That is, the local form of the equations are well-defined for all time, despite there not being a direct geometric formulation of the theory. Moreover, the Carrollian Maxwell equations can be cast into the form of the standard Maxwell equations, but with the temporal parameter being ``logarithmic time'' $u = \ln |t|$.  We stress that the theory obtained is not a ultra-relativistic limit of Lorentzian electromagnetism, and that we have non-trivial dynamics. The physical interpretation of the theory requires further work.  \par
We remark that our approach to the Hodge star operator differs from that of Fecko \cite{Fecko:2023,Fecko:2024}, where a modified Hodge star operator on Carrollian manifolds lacking some standard properties is defined. The Hodge star operator we define exhibits the standard properties, albeit only so away from $t=0$.
\section{Operators on Differential Forms on Carrollian $\R^\times$-bundles}
\subsection{Carrollian $\R^\times$-bundles with a Connection}
The fundamental definition we begin with is the following.
\begin{definition}[\cite{Bruce:2025}]
A principal $\R^\times$-bundle $\pi :P\rightarrow M$ is said to be a \emph{Carrollian $\R^\times$-bundle} if it is equipped with a degenerate metric $g$ such that $\ker(g) := \left \{X \in \Vect(P)  ~|~  g(X,-)=0 \right \} =  \Sec(\sV P)$, where $\sV P$ is the vertical bundle .
\end{definition}
We will take $\dim M = n$, and so $\dim P = n+1$. Note that the fundamental vector field of the principal action $\Delta_P$, which we will refer to as the \emph{Euler vector field}, spans  $\Sec(\sV P)$. We will restrict our attention to Carrollian $\R^\times$-bundles  with a chosen $\R^\times$-connection $(P, g, \Phi)$, such that $M$ is $n$-dimensional, the degenerate metric has signature $(1,\cdots,1,0)$, and the Euler vector field is a Killing vector field. As we will employ local coordinates $(x^a, t)$ on $P$, the signature of the degenerate metric has been chosen to align with the ordering of local coordinates, i.e., the kernel of $g$ lies along the fibre direction. In these adapted coordinates, $\Delta_P = t \partial_t$.  Note that the admissible  fibre preserving coordinate changes on $P$ are ``linearised Carrollian diffeomorphisms'', i.e., $x^{a'} = x^{a'}(x)$ and $t' = \phi(x)t$.  As shown in \cite{Bruce:2025}, any Carrollian bundle, i.e., a Carrollian manifold in which the associated foliation is a fibre bundle with typical fibre $\R$, can be (non-canonically) linearised, and there is a fibre-preserving diffeomorphism from the linearised bundle to the initial Carrollian bundle. Coupled with the one-to-one correspondence between line bundles and principal $\R^\times$-bundle, there is no real loss of generality; though care is needed when extending the constructions to $t=0$. In effect, we are picking adapted coordinates to make the coordinate changes linear in the fibre coordinates. \par 
The connection one-form associated with $\Phi$ is denoted $\theta$, which locally is given by $\theta = t^{-1} \rmd t + \rmd x^a A_a(x)$. Note that $\theta$ is a globally defined, real-valued one-form on $P$, but it is ill-defined on $L$ due to the singular nature at $t =0$. From the definition we immediately have $i_{\Delta_P}\theta = \mathbf{1}$, and $\mathcal{L}_{\Delta_P}\theta  = 0$.  As first presented in \cite{Bruce:2025}, $(P, g, \Phi)$ comes with the non-degenerate Lorentzian metric $G$, defined as $G := g - \theta \otimes \theta$, i.e., the signature of the metric is $(1,\cdots, 1, -1 )$; note the signature is aligned with the ordering of our coordinates on $P$. The connection is seen as part of the underlying geometry and not a dynamical field that is required by local symmetries or sourced by matter - its role is an intrinsic part of the Carrollian geometry.  Written out (locally) in block form, we have
\begin{align*}
G_{ij} = \begin{pmatrix}
(g_M)_{ab}   & 0 \\
0  & - 1
\end{pmatrix}
\,,
& &
G^{ij} = \begin{pmatrix}
(g_M)^{ab}  & 0 \\
0  & - 1
\end{pmatrix}  \, ,
\end{align*}
where $g_M$ is understood as the non-degenerate metric on the base manifold $M$. \par
In the coordinate coframe, locally we have 
\begin{align*}
G = \begin{pmatrix}
g_M - A   A^T & -t^{-1} A \\
-t^{-1}A^T  & - t^{-2}
\end{pmatrix}
\,,
& &
G^{-1} = \begin{pmatrix}
g_M^{-1}  & -t g_M^{-1} A \\
-t A^Tg_M^{-1}  & - t^2+ t^2 A^Tg_M^{-1}A
\end{pmatrix}  \, ,
\end{align*}
where $A$ is the vector of local gauge fields. Note that $\det(G) = - t^{-2}\,\det(g_M) $, and thus $\sqrt{|G|} = \sqrt{|g_M|}\,  t^{-1} $. We then have
\begin{equation}\label{eqn:VolP}
\mathsf{Vol}_P = (-1)^n \theta \wedge \mathsf{Vol}_M \,,
\end{equation}
where $\mathsf{Vol}_M$ is the canonical volume on $M$ built from $g_M$. It is important to note the metric on $M$ and the connection allows for a global invariant measure to be constructed, and this is vital to the construction of the Hodge star operator and the Hodge Laplacian. \par 
A differential $k$-form $\zx \in \Omega^k(P)$ is said to be \emph{homogeneous and of weight} $\lambda$ if $\mathcal{L}_{\Delta_P}\zx = \lambda\, \zx$. A differential $k$-form $\zx \in \Omega^k(P)$ is said to be a \emph{horizontal differential form} if $i_{\Delta_P} \zx = 0$. Locally, a horizontal differential form has no $\rmd t$ component.  We denote the module of horizontal differential $k$-forms as $\Omega_\sH^k(P)$. A differential $k$-form $\beta \in \Omega^k(P)$ is said to be a \emph{vertical differential form} if $i_X \eta = 0$ for all vector fields $X\in \Sec(\sH P)$.  A standard result from the theory of principal bundles is the decomposition of differential $k$-forms
\begin{equation}\label{eqn:DecompForms}
\Omega^k(P) = \Omega_\sH^k(P)\oplus \Omega_\sV^k(P)  = \Omega_\sH^k(P)\oplus \theta \wedge \Omega_\sH^{k-1}(P)\,,
\end{equation}
in particular, the vertical differential $k$-forms are $\Omega_\sV^k(P) \cong \theta \wedge \Omega_\sH^{k-1}(P)$. For example, the \emph{curvature two-form} of the connection $\Phi$ is defined as  $F :=  \rmd \theta \in  \Omega_\sH^2(P)$.  Observe that, the definition of a principal connection, we have  
$$\mathcal{L}_{\Delta_P}(\alpha + \theta \wedge \beta) = \mathcal{L}_{\Delta_P} \alpha + \theta \wedge \mathcal{L}_{\Delta_P} \beta\,,$$
thus, the Lie derivative with respect to the Euler vector field respects the decomposition of differential forms into horizontal and vertical differential forms.
\begin{proposition}
Let $P$ be an $\R^\times$-bundle. Then the de Rham derivative $\rmd : \Omega^k(P) \rightarrow \Omega^{k+1}(P)$ preserves the weight of homogeneous differential forms.
\end{proposition}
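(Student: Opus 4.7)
The plan is to use the standard identity that the de Rham derivative commutes with the Lie derivative along any vector field, and then specialise to $X = \Delta_P$. That is, I would like to show that $\mathcal{L}_{\Delta_P} \circ \rmd = \rmd \circ \mathcal{L}_{\Delta_P}$ as operators on $\Omega^\bullet(P)$, from which the claim follows by a one-line computation.

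First I would recall Cartan's magic formula, $\mathcal{L}_X = \rmd \circ i_X + i_X \circ \rmd$, which holds for any smooth vector field $X$ on $P$. Combining this with $\rmd \circ \rmd = 0$ gives
\begin{equation*}
\mathcal{L}_X \circ \rmd \;=\; \rmd \circ i_X \circ \rmd + i_X \circ \rmd \circ \rmd \;=\; \rmd \circ i_X \circ \rmd \;=\; \rmd \circ (\rmd \circ i_X + i_X \circ \rmd) \;=\; \rmd \circ \mathcal{L}_X\,.
\end{equation*}
Applying this identity with $X = \Delta_P$ to a homogeneous form $\xi \in \Omega^k(P)$ of weight $\lambda$ yields
\begin{equation*}
\mathcal{L}_{\Delta_P}(\rmd \xi) \;=\; \rmd(\mathcal{L}_{\Delta_P}\xi) \;=\; \rmd(\lambda \xi) \;=\; \lambda\, \rmd \xi\,,
\end{equation*}
so that $\rmd \xi$ is again homogeneous of weight $\lambda$, as required.

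There is no substantive obstacle to overcome here: the proposition is essentially a consequence of the naturality of the exterior derivative under diffeomorphisms (or equivalently, of Cartan's formula together with $\rmd^2 = 0$), and does not depend on any special feature of the $\R^\times$-bundle structure or the Carrollian metric. The only thing worth remarking on is that $\Delta_P$ is globally defined as the fundamental vector field of the principal action, so the identity $[\mathcal{L}_{\Delta_P}, \rmd] = 0$ holds globally on $P$ and the weight decomposition is preserved chart-independently.
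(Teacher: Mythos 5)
Your proof is correct and follows essentially the same route as the paper, which simply invokes the identity $\mathcal{L}_{\Delta_P} \circ \rmd = \rmd \circ \mathcal{L}_{\Delta_P}$; you merely spell out its derivation via Cartan's magic formula and $\rmd^2 = 0$ before applying it to a homogeneous form.
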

\begin{proof}
This is immediate as $\mathcal{L}_{\Delta_P} \circ \rmd =  \rmd \circ \mathcal{L}_{\Delta_P}$.
\end{proof}
\begin{remark}
We will concentrate on the case where the Euler vector field is Killing for the degenerate metric $g$; this is not essential for the following constructions. However, the non-Killing case will lead to the weight of differential forms not being preserved.  
\end{remark}
Note that, as standard, the de Rham differential does not respect the splitting of differential forms into horizontal and vertical differential forms. However, one can construct the \emph{covariant de Rham derivative} as 
\begin{align*}
& D : \Omega^k_\sH(P) \longrightarrow \Omega^{k+1}_\sH(P)\\
& D = \rmd - \theta \wedge \mathcal{L}_{\Delta_P}
\end{align*}
which locally is given by $D = \rmd x^a \left (\frac{\partial}{\partial x^a} - A_a \, t\frac{\partial}{\partial t} \ \right)$.
%
%
\subsection{The Hodge Star operator}
The Hodge star operator on $(P, g, \Phi)$ is defined in the standard way using the Lorentzian metric $G = g - \theta \otimes \theta$. 
\begin{definition}
Let $(P, g, \Phi)$ be a Carrollian $\R^\times$-bundle with a connection. The \emph{Hodge star operator} acting on  a $k$-form $\zx \in \Omega^k(P)$ is defined as the unique $(n+1 -k)$-form $\star \zx$, that satisfies
 $$\eta \wedge \star \zx :=  \textsf{Vol}_P\, \langle \eta, \zx\rangle_G = (-1)^n \,\theta \wedge \textsf{Vol}_ M\, \langle \eta, \zx\rangle_G\,, $$
 for all $k$-forms $\eta$. Here $\langle \cdot, \cdot \rangle_G$ is the pointwise inner product induced by $G$.
\end{definition}
One readily checks that $\star \star \zx = (-1)^{1 +k(n+1 -k)} \, \zx$, taking into account $\dim P = n+1$ and that $G$ has Lorentzian signature. Other standard results are $\star \mathbf{1} = \textsf{Vol}_P$ and $\star \textsf{Vol}_P = - \mathbf{1}$.
\begin{proposition}
Let $(P, g, \Phi)$ be a Carrollian $\R^\times$-bundle with a connection, and $\star$ be the associated Hodge star operator. If $\zx \in \Omega^k(P)$ is homogeneous of weight $\lambda$, then so is $\star \zx$.
\end{proposition}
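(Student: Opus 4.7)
The plan is to show that every ingredient entering the definition of $\star$ is invariant under $\mathcal{L}_{\Delta_P}$, and then extract the conclusion from the defining identity $\eta \wedge \star \xi = \textsf{Vol}_P \,\langle \eta, \xi\rangle_G$ by a uniqueness argument. Concretely, I would first verify that $\mathcal{L}_{\Delta_P} G = 0$ and $\mathcal{L}_{\Delta_P}\textsf{Vol}_P = 0$, then differentiate the defining relation.

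For the invariance of $G = g - \theta \otimes \theta$, observe that $\mathcal{L}_{\Delta_P} g = 0$ by the Killing assumption on $\Delta_P$ built into the setup, and $\mathcal{L}_{\Delta_P}\theta = 0$ is a standing property of the principal connection one-form (recorded earlier in the text together with $i_{\Delta_P}\theta = \mathbf{1}$). By the Leibniz rule on tensor products, $\mathcal{L}_{\Delta_P}(\theta \otimes \theta) = 0$, so $\mathcal{L}_{\Delta_P} G = 0$. For the volume form, I would use the expression $\textsf{Vol}_P = (-1)^n \theta \wedge \textsf{Vol}_M$ from \eqref{eqn:VolP}: since $\textsf{Vol}_M$ is the pull-back of a form on $M$ and $\Delta_P$ is vertical, $\mathcal{L}_{\Delta_P}\textsf{Vol}_M = 0$; combined with $\mathcal{L}_{\Delta_P}\theta = 0$ and the Leibniz rule for the wedge product, this gives $\mathcal{L}_{\Delta_P}\textsf{Vol}_P = 0$. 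As a consequence of $\mathcal{L}_{\Delta_P} G = 0$, the pointwise inner product satisfies the derivation property $\mathcal{L}_{\Delta_P}\langle \eta, \xi\rangle_G = \langle \mathcal{L}_{\Delta_P}\eta, \xi\rangle_G + \langle \eta, \mathcal{L}_{\Delta_P}\xi\rangle_G$.

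Now, for an arbitrary $k$-form $\eta \in \Omega^k(P)$, apply $\mathcal{L}_{\Delta_P}$ to both sides of $\eta \wedge \star \xi = \textsf{Vol}_P \,\langle \eta, \xi\rangle_G$. Using the Leibniz rule for $\mathcal{L}_{\Delta_P}$ on wedge products together with the invariances established above and the hypothesis $\mathcal{L}_{\Delta_P}\xi = \lambda \xi$, the cross terms $\mathcal{L}_{\Delta_P}\eta \wedge \star \xi$ and $\textsf{Vol}_P\,\langle \mathcal{L}_{\Delta_P}\eta, \xi\rangle_G$ cancel via the defining identity applied to the $k$-form $\mathcal{L}_{\Delta_P}\eta$, leaving
\[
\eta \wedge \mathcal{L}_{\Delta_P}(\star \xi) \;=\; \lambda\, \textsf{Vol}_P\,\langle \eta, \xi\rangle_G \;=\; \eta \wedge (\lambda\, \star \xi).
\]
Hence $\eta \wedge \bigl(\mathcal{L}_{\Delta_P}(\star \xi) - \lambda \star \xi\bigr) = 0$ for every $k$-form $\eta$.

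The final step is the uniqueness argument: since $G$ is non-degenerate, the pairing $\eta \mapsto \eta \wedge \alpha$ for $\alpha \in \Omega^{n+1-k}(P)$ vanishes identically only when $\alpha = 0$ (this is the same non-degeneracy that guarantees the existence and uniqueness of $\star \xi$ itself). Applying this to $\alpha = \mathcal{L}_{\Delta_P}(\star \xi) - \lambda \star \xi$ yields $\mathcal{L}_{\Delta_P}(\star \xi) = \lambda \star \xi$, which is precisely the claim. I expect the only mild subtlety to be bookkeeping in the Leibniz expansion and making the uniqueness step explicit; there is no genuine obstacle once the two invariances $\mathcal{L}_{\Delta_P}G = 0$ and $\mathcal{L}_{\Delta_P}\textsf{Vol}_P = 0$ are in hand.
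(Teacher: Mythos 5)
Your argument is correct, and its starting point is the same as the paper's: the standing assumptions $\mathcal{L}_{\Delta_P} g = 0$ (Euler vector field Killing for $g$) and $\mathcal{L}_{\Delta_P}\theta = 0$ give $\mathcal{L}_{\Delta_P} G = 0$. Where you diverge is in what you do with this. The paper simply cites the standard fact that the Lie derivative along a Killing vector field commutes with the associated Hodge star operator, $\mathcal{L}_{\Delta_P}(\star\zx) = \star\,\mathcal{L}_{\Delta_P}\zx$, and reads off the claim. You instead prove that commutation identity from scratch: you check $\mathcal{L}_{\Delta_P}\mathsf{Vol}_P = 0$ (correctly, since $\mathsf{Vol}_M$ appears as a basic, i.e.\ pulled-back, form and $\Delta_P$ is vertical), differentiate the defining relation $\eta\wedge\star\zx = \mathsf{Vol}_P\,\langle\eta,\zx\rangle_G$, cancel the cross terms by applying the defining relation to the $k$-form $\mathcal{L}_{\Delta_P}\eta$, and finish with the nondegeneracy of the wedge pairing. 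This buys a self-contained proof that does not lean on an external lemma, at the cost of some bookkeeping; the paper's route is shorter but leaves the key commutation as a quoted fact. Two minor points of precision: the final uniqueness step rests on the pointwise nondegeneracy of the wedge pairing $\Omega^k\times\Omega^{n+1-k}\to\Omega^{n+1}$, which is a linear-algebra fact independent of $G$ (the metric enters only in the existence of $\star\zx$, not its uniqueness); and, if one wants to avoid the explicit check on $\mathsf{Vol}_P$, its invariance also follows directly from $\mathcal{L}_{\Delta_P}G = 0$, since $\mathsf{Vol}_P$ is the canonical volume of $G$.
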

\begin{proof}
As we have restricted attention to $\Delta_P$ being a Killing vector field for $g$, and $\mathcal{L}_{\Delta_P} \theta =0$, $\Delta_P$ is a Killing vector field for $G = g - \theta \otimes \theta$. We know that the Lie derivative of a Killing vector field commutes with the associated Hodge star operator, i.e.,
$$\mathcal{L}_{\Delta_P}(\star \zx) = \star \mathcal{L}_{\Delta_P} \zx\,.$$
Since $\zx$ is homogeneous, this implies $\mathcal{L}_{\Delta_P}(\star \zx) = \lambda \, \star \zx$.
\end{proof}
Due to the exact form of the induced metric we have the following result which is known from Kaluza--Klein theory.
\begin{proposition}\label{prop:SplitHodgeHV}
Let $(P, g, \Phi)$ be a Carrollian $\R^\times$-bundle with a connection. Then the Hodge star operator maps horizontal differential forms to  vertical differential forms and vice versa, i.e.,
\begin{align*}
& \star : \Omega^k_\sH(P) \longrightarrow  \Omega^{(n+1) -k}_\sV(P)\,, \\
& \star : \Omega^k_\sV(P) \longrightarrow \Omega^{(n+1) -k}_\sH(P)\,,
\end{align*}
and moreover, these maps are isomorphisms.
\end{proposition}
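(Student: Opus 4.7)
The plan is to exploit the block-diagonal form of the Lorentzian metric $G = g - \theta \otimes \theta$ in an adapted coframe. Locally, choose a $g_M$-orthonormal horizontal coframe $(e^1,\ldots,e^n)$; since the horizontal distribution is by construction $G$-orthogonal to $\Delta_P$ and $\langle \theta, \theta \rangle_G = -1$, the tuple $(e^1,\ldots,e^n,\theta)$ is an orthonormal coframe for $G$ in which $\theta$ is the unique timelike direction. The induced pointwise inner product $\langle \cdot, \cdot \rangle_G$ on $k$-forms then splits along the decomposition \eqref{eqn:DecompForms}, so that $\langle \alpha, \theta \wedge \beta' \rangle_G = 0$ whenever $\alpha, \beta'$ are horizontal. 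By \eqref{eqn:VolP}, $\mathsf{Vol}_P = (-1)^n\, \theta \wedge e^1 \wedge \cdots \wedge e^n$.

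To prove $\star\, \Omega^k_\sH(P) \subseteq \Omega^{n+1-k}_\sV(P)$ for $k \geq 1$, take $\omega \in \Omega^k_\sH(P)$ and use \eqref{eqn:DecompForms} to write $\star \omega = \alpha + \theta \wedge \beta$ with $\alpha, \beta$ horizontal. For any vertical $\eta = \theta \wedge \eta' \in \Omega^k_\sV(P)$ the defining identity yields $\eta \wedge \star \omega = \langle \eta, \omega \rangle_G\, \mathsf{Vol}_P = 0$, while on the left $\eta \wedge \theta \wedge \beta = 0$, leaving $\theta \wedge \eta' \wedge \alpha = 0$ for every horizontal $\eta' \in \Omega^{k-1}_\sH(P)$. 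At each point, this is (up to the injective factor $\theta \wedge (\cdot)$) the standard top-form pairing on the rank-$n$ horizontal subbundle, which is non-degenerate, so $\alpha = 0$. The case $k = 0$ is immediate from $\star \mathbf{1} = \mathsf{Vol}_P \in \Omega^{n+1}_\sV(P)$. A symmetric argument proves the reverse inclusion $\star\, \Omega^k_\sV(P) \subseteq \Omega^{n+1-k}_\sH(P)$: for $\omega$ vertical, decompose $\star \omega = \alpha + \theta \wedge \beta$ and test against horizontal $\eta$; the term $\eta \wedge \alpha$ is a horizontal form of degree $n+1$ and so vanishes (horizontal forms have degree at most $n$), and non-degeneracy again forces $\beta = 0$.

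The isomorphism claim then follows from the standard identity $\star \star = (-1)^{1 + k(n+1-k)}\, \mathbf{1}$ on $\Omega^k(P)$, valid on any oriented pseudo-Riemannian manifold of dimension $n+1$ with a single negative eigenvalue. Composing the two maps $\star\colon \Omega^k_\sH(P) \to \Omega^{n+1-k}_\sV(P)$ and $\star\colon \Omega^{n+1-k}_\sV(P) \to \Omega^k_\sH(P)$ just established yields a non-zero scalar multiple of the identity in either order, so both maps are isomorphisms. The main subtle point is the non-degeneracy of the horizontal top-form pairing, which is why it is convenient to first set up the orthonormal coframe; once this is granted, everything is forced by the defining identity of $\star$ alone.
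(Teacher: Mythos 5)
Your proof is correct, and it shares the paper's basic setup: an adapted $G$-orthonormal coframe $(e^1,\dots,e^n,\theta)$ with $\theta$ the timelike direction, the decomposition \eqref{eqn:DecompForms}, the volume form \eqref{eqn:VolP}, and the identity $\star\star=\pm\Id$ for the isomorphism claim. The concluding step, however, is genuinely different. The paper computes $\star$ explicitly on each summand, obtaining $\star S_k=(-1)^{n+k}\,\theta\wedge\star_M S_k$ and $\star(\theta\wedge S_{k-1})=(-1)^{n+1}\star_M S_{k-1}$, and reads off the mapping property from these formulas, then argues that the conclusion is independent of the chosen local cobasis. You never compute $\star$: you write $\star\omega=\alpha+\theta\wedge\beta$ and kill the unwanted component by testing the defining identity against vertical (resp.\ horizontal) forms, using the $G$-orthogonality of the two summands of \eqref{eqn:DecompForms} together with the pointwise non-degeneracy of the wedge pairing $\Omega^{k-1}_\sH(P)\times\Omega^{n+1-k}_\sH(P)\to\Omega^{n}_\sH(P)$ on the rank-$n$ horizontal bundle (and degree reasons plus $\star\mathbf{1}=\mathsf{Vol}_P$ for the edge cases). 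Your route is more invariant -- the orthogonality of horizontal and vertical forms under $\langle\cdot,\cdot\rangle_G$ only uses the $G$-orthogonality of the horizontal and vertical distributions, so no remark about independence of the local cobasis is needed -- whereas the paper's explicit computation additionally yields the relation between $\star$ and $\star_M$, which is reused later (for instance in the Carrollian electromagnetism section). Both arguments are complete; yours trades the explicit formulas for a cleaner duality argument.
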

\begin{proof}
Locally we can always find an orthonormal coframe $\{e^a, \theta\}$, such that the the metric takes the form of the Minkowski metric and the volume (locally) simplifies to
$$\mathsf{Vol}_P = e^1 \wedge e^2 \wedge \cdots \wedge e^n \wedge \theta\, .$$  
We set $S_k$ and $S_{k-1}$ to be local differential forms explicitly build using the orthonormal coframe on $M$, i.e., explicitly given in terms of $\{e^a\}$. Then, locally, any differential $k$-forms can be we expressed as $\zx = S_k + \theta \wedge S_{k-1}$ and $\eta = \bar{S}_k + \theta \wedge \bar{S}_{k-1}$.  As the metric is diagonal the calculation of the Hodge star simplifies as compared with using the coordinate basis. Explicitly, 
\begin{align*}
\eta \wedge \star \zx & = (-1)^n \, \theta \wedge \mathsf{Vol}_M \big( \langle \bar{S}_k, S_k \rangle_G + \langle \theta, \theta \rangle_G \langle \bar{S}_{k-1}, S_{k-1} \rangle_G  \big)\\
&= (-1)^n \, \theta \wedge\big(\bar{S}_k \wedge \star_M S_k - \bar{S}_{k-1} \wedge \star_M S_{k-1} \big)\,.
\end{align*}
where $\star_M$ is the Hodge star operator on $M$. We then observe that
$$\star S_k = (-1)^{n+k}\, \theta \wedge \star_M S_k\,, \qquad \star \big(\theta \wedge S_{k-1}\big) = (-1)^{n+1}\, \star_M S_{k-1}\,,$$
In particular, locally and in this chosen cobasis, horizontal differential forms get sent to vertical differential forms, and vice versa. However, this observation does not depend on the chosen local cobasis; though there will not be such a simple relation between $\star$ and $\star_M$. Moreover, as we have a sheaf of differential forms, the global mapping of  horizontal differential forms to vertical differential forms and vertical differential forms to horizontal differential forms is evident. The isomorphism property is evident as $\star \star = \pm \Id$, that is, the Hodge star operator is invertible.
\end{proof}
%
%
\subsection{Towards Carrollian Electromagnetism}
As a potentially physically relevant example, we construct a theory of electromagnetism that closely parallels its standard formulation on curved backgrounds. In particular, we will use the Hodge star operator to formulate a version of electromagnetism on a Carrollian $\R^\times$-bundle and then examine whether the theory can be extended to the associated line bundle $L$.\par 
Consider a Carrollian $\R^\times$-bundle $(P,g, \Phi)$ equipped with a connection, whose connection one-form is $\theta = \rmd t\, t^{-1} + \rmd x^a A_a(x)$, and such that $\Delta_P$ is Killing. We define the Lorentzian metric a $G = g - \theta \otimes \theta$ and consider the associated Hodge star operator. Recall that we have the decomposition of differential $k$-forms into horizontal and vertical parts, i.e., $\Omega^k(P) = \Omega^k_\sH(P) \oplus \theta \wedge \Omega^{k-1}_\sH(P)$.
\begin{definition}
Let $(P,g, \Phi)$ be a Carrollian $\R^\times$-bundle $(P,g, \Phi)$ equipped with a connection and whose Euler vector field is Killing. Then the \emph{Carrollian electromagnetic field strength} on $(P,g, \Phi)$ is a two-form
 $$\mathbb{F} = \mathbb{B} + \theta \wedge \mathbb{E} \in \Omega^2(P)\,, $$ 
where the \emph{magnetic field} is $\mathbb{B} \in \Omega_\sH^2(P)$, and the \emph{electric field} is $\mathbb{E} \in \Omega_\sH^1(P)$. The associated \emph{Carrollian Maxwell equations} (in vacuum) are
\begin{equation}\label{eqn:CarrMax}
\rmd \mathbb{F} =0\,, \qquad \rmd \star \mathbb{F} =0\,.
\end{equation}
\end{definition}
Note that the Carrollian Maxwell equations are fully covariant as they are independent of any choice of coordinates.  Using a local orthonormal cobasis $\{e^a, \theta\}$, we locally write
 $$\mathbb{F}_{loc} = \mathbb{B}_{loc} + \theta \wedge \mathbb{E}_{loc}\,,$$
 where  $\mathbb{B}_{loc}$ and $ \mathbb{E}_{loc}$ are local differential forms explicitly defined in terms of chosen local cobasis.  We then observe that
 $$\star \mathbb{F}_{loc} = (-1)^{n+1}\, \star_M \mathbb{E}_{loc}  +(-1)^n \,\theta \wedge \star_M \mathbb{B}_{loc}\,,$$
 where $\star_M$ is the Hodge star operator on $M$.  Furthermore the local Carrollian Maxwell equations can be written as 
 \begin{align*}
 & \rmd \mathbb{F}_{loc} = \rmd \mathbb{B}_{loc} + \rmd \theta \wedge \mathbb{E}_{loc} - \theta \wedge \rmd \mathbb{E}_{loc}\,, \\
 & \rmd \star \mathbb{F}_{loc} = (-1)^{n+1}\,\rmd (\star_M \mathbb{E}_{loc}) + (-1)^n\, \rmd \theta \wedge \star_M \mathbb{B}_{loc} - (-1)^n \, \theta \wedge \rmd (\star_M \mathbb{B}_{loc})\,.
 \end{align*}
Provided the connection is flat, i.e., $\rmd \theta =0$ (which implies that $P = M \times \R^\times$), the local Carrollian Maxwell equations can be written as
\begin{align}\label{eqn:LocCarrMax}
& \rmd \mathbb{B}_{loc} = 0\,, && \rmd \mathbb{E}_{loc} = 0\,,\\ \nonumber 
& \rmd \star_M \mathbb{B}_{loc} = 0\,, && \rmd \star_M \mathbb{E}_{loc} = 0\,.
\end{align}
Some comments: The local Maxwell equations \eqref{eqn:LocCarrMax} are
\begin{enumerate}
\item  independent of the chosen flat connection;
\item  extend (locally) to include $t =0$, that is they are defined locally on $L$, although the Hodge star operator is degenerate.  There is not a global covariant formulation on $L$.
\end{enumerate}
For simplicity, we consider $P = \R^3 \times \R^\times$, where we use the trivial connection $\theta = \rmd t\, t^{-1}$, and set the metric on $\R^3$ to be the standard Euclidean metric. The natural cobasis to use here is $\{\rmd x^a, \theta\}$. Note that in this example, we have a global orthonormal cobasis. The volume is given by $\mathsf{Vol}_P =  \rmd x \wedge \rmd y \wedge \rmd z\wedge \theta$. The Hodge star operator acting on two-forms  is given by
\begin{align*}
& \star(\rmd x  \wedge \rmd y) = - \theta \wedge \rmd z\,,  && \star(\rmd x  \wedge \rmd z) =  \theta \wedge \rmd y\,, && \star(\rmd y  \wedge \rmd z) = - \theta \wedge \rmd x\,,  \\
& \star(\theta \wedge \rmd x ) = \rmd y \wedge \rmd z\,,  && \star(\theta \wedge \rmd y  ) =  - \rmd x \wedge \rmd z\,, && \star(\theta  \wedge \rmd z) =  \rmd x \wedge \rmd y\,. 
\end{align*}
To make connection with standard electromagnetism, we define the magnetic and electric fields as 
\begin{align*}
&\mathbb{B} := \rmd x \wedge \rmd y \, B_z - \rmd x \wedge \rmd z \, B_y + \rmd y \wedge \rmd z \, B_x\,,\\
& \mathbb{E} := \theta \wedge \big (\rmd x \, E_x + \rmd y \,E_y + \rmd z  \,E_z  \big)\,.
\end{align*}
Carrollian Maxwell equations are (after multiplication by $t$)
\begin{align}\label{eqn:CarrollianEM}
&\nabla \cdot \vec{B} = 0\,, && \nabla \times \vec{E} - L_{\Delta_P} \vec{B}=0\,,\\\nonumber
& \nabla \cdot \vec{E} = 0 \,, && \nabla \times \vec{B} + L_{\Delta_P} \vec{E}=0 \,, 
\end{align}
where $\Delta_P = t \partial_t$.\par
Some further comments are in order: The Carrollian Maxwell equations \eqref{eqn:CarrollianEM}
\begin{enumerate}
\item are \emph{not} the electric nor the magnetic limit of standard Lorentzian electromagnetism (see \cite{Duval:2014}). 
\item exhibit electromagnetic duality, i.e., $\vec{E} \mapsto \vec{B}$ and $\vec{B} \mapsto -\vec{E}$. 
\item extend to $L$, via the smooth inclusion $P \hookrightarrow L$  even though the Hodge star operator is not well-defined on $L$; it is degenerate at $t=0$. 
\end{enumerate}
The local coordinate form of the Carrollian Maxwell equations are \emph{not} invariant under all admissible coordinate transformations of $P = \R^3 \times \R^\times$ (and by extension on $L = \R^3 \times \R$). However, we observe that the equations are invariant under  the subgroup defined by rescaling of time, i.e., $t\mapsto t' = \phi_0 \, t$, where $\phi_0 \in \R^\times$. This observations aligns with the idea that an observer is `frozen' in the spacial directions, and so the only reasonable transformations that can be allowed are rescaling of time (remembering we have linearised the admissible form of Carrollian diffeomorphisms). \par 
It will be illustrative to consider ``logarithmic time'' $u = \ln|t|$, as the Carrollian Maxwell equations can be written as
\begin{align}
&\nabla \cdot \vec{B} = 0\,, && \nabla \times \vec{E} =  \partial_u \vec{B}\,,\\\nonumber
& \nabla \cdot \vec{E} = 0 \,, && \nabla \times \vec{B} =  - \partial_u \vec{E} \,. 
\end{align}
This reformulation casts the theory into a form closer to standard Lorentzian electromagnetism, with the temporal parameter being $u$.  Following the standard calculations, we arrive at the  \emph{Carrollian  wave equations} 
\begin{align}
\partial^2_u \vec{E}  - \nabla^2 \vec{E} =0 \,,\\ \nonumber
\partial^2_u \vec{B} - \nabla^2 \vec{B}= 0 \,.
\end{align}
Thus, Carrollian electromagnetism admits freely propagating wave solutions where the temporal parameter is ``logarithmic time'' $u$. Whether or not $u$ encodes a physical degree of freedom is not immediately clear. None-the-less, the constructions in this subsection suggest that non-trivial dynamics can exits on Carrollian manifolds.
%
%
\subsection{The  de Rham Codifferential}
With the Hodge star operator in place, the de Rham codifferential is defined as standard.
\begin{definition}
Let $(P, g, \Phi)$ be a Carrollian $\R^\times$-bundle with a connection. The \emph{de Rham codifferential} is the $\R$-linear map
$$\delta: \Omega^k(P) \rightarrow \Omega^{k-1}(P)\,,$$
defined as $\delta \zx := (-1)^{1 + k(n+1-k)}\, \star \rmd \star\zx$, where $\rmd$ is the  de Rham differential on $P$, and $\star$ is the Hodge star operator associated with $G = g - \theta \otimes \theta$.
\end{definition}
A standard result is that
 $$\delta^2 \zx = 0\,,$$
for any $\zx \in \Omega^k(P)$.\par 
As de Rham differential and the Hodge star operator respects homogeneity and weight, the de Rham codifferential behaves similarly. 
\begin{proposition}
Let $(P, g, \Phi)$ be a Carrollian $\R^\times$-bundle with a connection, and let $\delta$ be the associated de Rham codifferential. Then, if $\zx \in \Omega^k(P)$ is a homogeneous  and of weight $\lambda$, then so is $\delta\zx \in \Omega^{k-1}(P)$.
\end{proposition}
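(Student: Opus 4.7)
The plan is to unwind the definition $\delta\zx = (-1)^{1+k(n+1-k)}\,\star\rmd\star\zx$ and apply, in sequence, the two preservation results already proved earlier in this section, namely that both the Hodge star operator and the de Rham differential commute with $\mathcal{L}_{\Delta_P}$ on homogeneous forms. Since weight under $\mathcal{L}_{\Delta_P}$ is preserved by each factor, the composite inherits the property.

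Concretely, I would proceed as follows. Assuming $\mathcal{L}_{\Delta_P}\zx = \lambda\,\zx$, the preceding proposition on the Hodge star gives $\mathcal{L}_{\Delta_P}(\star\zx) = \lambda\,\star\zx$, so $\star\zx \in \Omega^{n+1-k}(P)$ is homogeneous of weight $\lambda$. Next, by the earlier proposition stating $\mathcal{L}_{\Delta_P}\circ\rmd = \rmd\circ\mathcal{L}_{\Delta_P}$, the form $\rmd\star\zx \in \Omega^{n+2-k}(P)$ is again homogeneous of weight $\lambda$. A second application of the Hodge star preservation yields $\mathcal{L}_{\Delta_P}(\star\rmd\star\zx) = \lambda\,\star\rmd\star\zx$. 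Finally, the scalar prefactor $(-1)^{1+k(n+1-k)}$ is constant and commutes trivially with $\mathcal{L}_{\Delta_P}$, so $\mathcal{L}_{\Delta_P}(\delta\zx) = \lambda\,\delta\zx$, establishing that $\delta\zx \in \Omega^{k-1}(P)$ is homogeneous of weight $\lambda$.

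There is no real obstacle here; the argument is a direct three-step chain of previously established commutation relations. The only thing worth noting is that the previous propositions are stated for forms of arbitrary degree, which is exactly what is needed since $\star$ shifts the degree from $k$ to $n+1-k$ and $\rmd$ shifts it further to $n+2-k$ before $\star$ brings us back to degree $k-1$. The ultimate reason the property holds is the hypothesis that $\Delta_P$ is Killing for $g$ (and hence for $G$), combined with $\mathcal{L}_{\Delta_P}\theta = 0$, which was already exploited in the Hodge star proposition and requires no further input here.
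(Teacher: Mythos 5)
Your argument is correct and is essentially the paper's own proof: both rest on the fact that $\mathcal{L}_{\Delta_P}$ commutes with $\rmd$ and with $\star$ (the latter because $\Delta_P$ is Killing for $G$ and $\mathcal{L}_{\Delta_P}\theta = 0$), so that $\mathcal{L}_{\Delta_P}$ commutes with $\delta = \pm\,\star\rmd\star$; you merely spell out the three-step chain that the paper states in one line. No gap, nothing further needed.
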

\begin{proof}
As $\mathcal{L}_{\Delta_P}$ commutes with the Hodge dual and the covariant de Rham derivative, it is clear that $\mathcal{L}_{\Delta_P}\big(\delta \zx \big) = \delta\big(\mathcal{L}_{\Delta_P}\zx\big)$. Thus, assuming $\mathcal{L}_{\Delta_P} \zx = \lambda \, \zx$, we have $\mathcal{L}_{\Delta_P}\big(\delta \zx \big) = \lambda \, \delta \zx$.
\end{proof}
%
%
\subsection{The Hodge--de Rham Laplacian}
With the de Rham codifferential in place, we can define the associated Hodge--de Rham Laplacian. 
\begin{definition}
Let $(P, g, \Phi)$ be a Carrollian $\R^\times$-bundle with a connection. The \emph{Hodge--de Rham Laplacian} is the $\R$-linear map
$$\Delta_{HdR} : \Omega^k(P) \rightarrow \Omega^{k}(P)\,,$$
defined as $\Delta_{HdR}  := \rmd \circ \delta + \delta \circ \rmd$, where $\rmd$ is the de Rham differential, and $\delta$ is the de Rham codifferential on $P$.
\end{definition}
The Hodge de-Rham Laplacian  does not, in general, respect the decomposition of differential forms into horizontal and vertical differential forms.
\begin{proposition}
Let $(P, g, \Phi)$ be a Carrollian $\R^\times$-bundle with a connection, and $\Delta_{HdR}$ be the associated Hodge--de Rham Laplacian. Then, if $\zx \in \Omega^k(P)$ is a homogeneous and of weight $\lambda$, then so is $\Delta_{HdR}\zx \in \Omega^k(P)$.
\end{proposition}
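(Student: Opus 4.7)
The plan is to leverage the two preceding propositions showing that $\rmd$ and $\delta$ each commute with $\mathcal{L}_{\Delta_P}$ (equivalently, each preserves homogeneous weight), and then simply observe that $\Delta_{HdR}$ is built from these two operators by composition and addition. Since the property ``commutes with $\mathcal{L}_{\Delta_P}$'' is closed under composition and $\R$-linear combinations, the result follows at once.

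Concretely, I would first record the identity $\mathcal{L}_{\Delta_P} \circ \rmd = \rmd \circ \mathcal{L}_{\Delta_P}$ (the standard Cartan formula $\mathcal{L}_X = \rmd \ix_X + \ix_X \rmd$ implies this on any manifold), and recall from the earlier proposition on the codifferential that $\mathcal{L}_{\Delta_P} \circ \delta = \delta \circ \mathcal{L}_{\Delta_P}$. From these,
\begin{align*}
\mathcal{L}_{\Delta_P} \circ \Delta_{HdR}
&= \mathcal{L}_{\Delta_P} \circ (\rmd \circ \delta + \delta \circ \rmd) \\
&= \rmd \circ \delta \circ \mathcal{L}_{\Delta_P} + \delta \circ \rmd \circ \mathcal{L}_{\Delta_P} \\
&= \Delta_{HdR} \circ \mathcal{L}_{\Delta_P}.
\end{align*}

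Applying this to $\zx \in \Omega^k(P)$ with $\mathcal{L}_{\Delta_P}\zx = \lambda\, \zx$ gives
\[
\mathcal{L}_{\Delta_P}\bigl(\Delta_{HdR}\zx\bigr) = \Delta_{HdR}\bigl(\mathcal{L}_{\Delta_P}\zx\bigr) = \lambda\, \Delta_{HdR}\zx,
\]
which is exactly the claim that $\Delta_{HdR}\zx$ is homogeneous of weight $\lambda$.

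There is no real obstacle here: the content sits entirely in the two previous propositions, and the essential input (that $\Delta_P$ is Killing for $G = g - \theta \otimes \theta$, which is where one uses $\mathcal{L}_{\Delta_P} g = 0$ and $\mathcal{L}_{\Delta_P} \theta = 0$) has already been exploited to show $\star$ commutes with $\mathcal{L}_{\Delta_P}$. The only thing worth double-checking is that both the commutation relations being invoked are genuine equalities of operators on all of $\Omega^\bullet(P)$, not just on some graded piece, so that the composition step above is unambiguous; this is indeed the case since $\mathcal{L}_{\Delta_P}$, $\rmd$, and $\delta$ are all defined on the full de Rham complex.
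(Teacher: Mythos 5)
Your argument is correct and is essentially identical to the paper's proof: both use the commutation of $\mathcal{L}_{\Delta_P}$ with $\rmd$ and with $\delta$ (the two preceding propositions) to conclude $\mathcal{L}_{\Delta_P}\circ \Delta_{HdR} = \Delta_{HdR}\circ \mathcal{L}_{\Delta_P}$, and then apply this to a homogeneous form. Your write-up simply spells out the composition step and the weight computation in more detail than the paper does.
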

\begin{proof}
This follows directly as $\mathcal{L}_{\Delta_P}$ commutes with both $\rmd$ and $\delta$, which implies that $\mathcal{L}_{\Delta_P}\circ \Delta_{HdR} = \Delta_{HdR} \circ \mathcal{L}_{\Delta_P}$. 
\end{proof}
\begin{definition}
Let $(P, g, \Phi)$ be a Carrollian $\R^\times$-bundle with a connection. A differential $k$-form $\zx \in \Omega^k(P)$ is said to be
\begin{enumerate}
\item \emph{closed} if $\rmd \zx =0$,
\item \emph{coclosed} if $\delta \zx =0$, and
\item \emph{harmonic} if $\Delta_{HdR} \zx =0$.
\end{enumerate}
\end{definition}
Note that if a differential $k$-form is both closed and coclosed, then it is harmonic. However, as we have a Lorentzian metric $G = g - \theta \otimes \theta$, the converse is not necessarily true.
\subsection{Harmonic Forms on Event Horizons}\label{sebsec:BHHorizon}
A potentially physically interesting application of  the notions established in this note is the following.  Consider the Schwarzschild black hole $(\mathcal{M} = \R^2 \times S^2, \rmd s^2)$, where we will employ Eddington--Finkelstein coordinates to write the spacetime interval as
$$\rmd s^2 = - \left(1 - \frac{(2 \kappa)^{-1}}{r} \right)\rmd v^2 + 2\, \rmd v \rmd r + r^2 \rmd \Omega^2\,,$$
where $\rmd \Omega^2$ is the round metric on the sphere $S^2$. Here $\kappa$ is the surface gravity of the black hole.  The \emph{event horizon} is defined as $\mathcal{H} := \{ p\in \mathcal{M} ~~|~~ r(p) = (2 \kappa)^{-1}\} = S^2 \times \R$, which is a trivial line bundle. The induced degenerate metric on $\mathcal{H}$ is $g = (2 \kappa)^{-2} g_{S^2}$, where $g_{S^2}$  is the round metric on $S^2$, note the signature is $(1,1,0)$. The degenerate direction is spanned by the Killing vector field $\partial_v$, and so $\ker(g) = \Span(\partial_v)$. \par 
The principal  $\R^\times$-bundle is given by removing the zero section, and the degenerate metric is $g$ now restricted to $v \neq 0$. To remain consistent with earlier notation, we relabel the fibre coordinate $v$ as $t$.  The Euler vector field $\Delta_P =  t \partial_t$ spans $\ker(g)$; there is no dependence of $t$ in the degenerate metric $g$. Thus, we have a Carrollian $\R^\times$-bundle whose Euler vector field is Killing.  As $P = S^2 \times \R^\times$ is trivial, we can select the trivial connection, so $A =0$ meaning $\theta = \rmd t t^{-1}$.   The metric is then $G =(2 \kappa)^{-2} g_{S^2} - \theta  \otimes \theta$.\par 
We will employ standard angular coordinates $(\vartheta, \varphi)$ on $S^2$, and work with the orthonormal coframe
$$e^1 = (2\kappa)^{-1}\, \rmd \vartheta\,, \qquad e^2 = (2\kappa)^{-1}\, \rmd \varphi \, \sin(\vartheta)\,.$$
The volume form on $P$ given by $\mathsf{Vol}_P = e^1 \wedge e^2  \wedge \theta$. Using the orthonormal coframe, the Hodge dual can easily be calculated.
\begin{align*}
& \star \mathbf{1} = e^1 \wedge e^2 \wedge \theta\,, && \star( e^1 \wedge e^2) = \theta\,,\\
& \star e^1 = e^2 \wedge \theta\,,&& \star(e^1 \wedge \theta)=e^2\,,\\
&\star e^2 =  - e^1 \wedge \theta\,, &&\star(e^2 \wedge \theta) = - e^1\,,\\
& \star \theta =  - e^1 \wedge e^2\,,&& \star(e^1 \wedge e^2 \wedge \theta) = - \mathbf{1}\,.
\end{align*}
We define the following differential forms using the stated orthonormal coframe
\begin{align*}
&f \in C^\infty(P) = \Omega_\sH^0(P) \,,
&& \alpha =   S_1 + \theta \wedge T_0 \in \Omega^1_\sH \oplus \theta \wedge \Omega_\sH^0(P)\,,\\
& \beta  = S_2  + \theta \wedge T_1 \in \Omega^2_\sH \oplus \theta \wedge \Omega_\sH^1(P) && \gamma =   \theta \wedge T_2 \in  \theta \wedge \Omega_\sH^2(P)\,,  
\end{align*}
so that a direct computation using computer algebra gives 
\begin{center}
\renewcommand{\arraystretch}{1.4}
\begin{tabular}{c|l}
\textbf{Form Degree} & \textbf{Hodge--de Rham Laplacian} \\ \hline
$0$ & $\Delta_{HdR} f =  \Delta_{S^2}f  -  \Delta_P^2 f$ \\
$1$ & \begin{minipage}[t]{0.85\textwidth}
$\Delta_{HdR} \alpha =  \big(\Delta_{S^2} S_1 - \mathcal{L}_{\Delta_P}^2 S_1 - 2\, \mathcal{L}_{\Delta_P} S_1\big) - \theta \wedge\big ( 2\,  \operatorname{div}_{S^2}(\vec{S}_1)-(\Delta_{S^2} T_0 - \mathcal{L}_{\Delta_P}^2 T_0) \big) $
\end{minipage} \\
$2$ & \begin{minipage}[t]{0.85\textwidth}
$\Delta_{HdR} \beta =\big(\Delta_{S^2} S_2 - \mathcal{L}_{\Delta_P}^2 S_2 - 2\, \mathcal{L}_{\Delta_P} S_2\big) - \theta \wedge\big ( 2\,  \operatorname{div}_{S^2}(\vec{T}_1)-(\Delta_{S^2} T_1 - \mathcal{L}_{\Delta_P}^2 T_1) \big)$
\end{minipage} \\
$3$ & $\Delta_{HdR} \gamma = - \theta \wedge  \big(\Delta_{S^2} T_2 - \mathcal{L}_{\Delta_P}^2 T_2\big)$
\end{tabular}
\end{center}
where $ \Delta_{S^2}$ is the standard Hodge--de Rham Laplacian acting on differential forms on $S^2$ with the scaled metric $(2 \kappa)^{-2} g_{S^2}$, and in angular coordinates
\begin{align*}
& \operatorname{div}_{S^2}( \vec{S}_1) := (2 \kappa)^2 \left(\frac{1}{\sin\vartheta} \frac{\partial}{\partial \vartheta} \left( \sin\vartheta\, S_{1,\vartheta} \right)
+ \frac{1}{\sin^2\vartheta} \frac{\partial S_{1,\varphi}}{\partial \varphi}\right)\,,\\ & \operatorname{div}_{S^2}( \vec{T}_1) := (2 \kappa)^2 \left(\frac{1}{\sin\vartheta} \rmd \vartheta \frac{\partial}{\partial \vartheta} \left( \sin\vartheta\, T_{1, \vartheta} \right)
+ \frac{1}{\sin^2\vartheta}\,\rmd \varphi \frac{\partial  T_{1, \varphi}}{\partial \varphi}\right)\,.
\end{align*}
Note that as written, the Hodge--de Rham Laplacians are written invariantly and so not depend on the coordinates or coframe employed. Thus, the expressions in the above table are invariant under the relevant bundle automorphims.
\begin{remark}
The Hodge--de Rham Laplacian on functions (zero-forms) is the same as the Laplacian presented in \cite{Bruce:2025b}, which was derived using a sigma model. This is completely expected from the general theory of Laplacians and their relation with non-linear sigma models. 
\end{remark}
The question of extending the definition of the Hodge--de Rham Laplacians to include $t =0$, i.e., being well-defined for all time, can be addressed. We will make the natural assumption that all the components of the differential forms are smooth for all $t$, in particular including $t =0$. Then, if the components $S_1, T_0, T_1$ and $T_2$ are at least linear in $t$ in the neighbourhood of the zero section, then the Hodge--de Rham Laplacians on the event horizon of a black hole extend to include $t=0$.  We will refer to such differential forms as \emph{regular differential forms}.
\begin{definition}
A regular differential form $\zx$ on $(S^2 \times \R^\times , (2 \kappa)^{-2} g_{S^2} )$  is said to be a \emph{harmonic form on the Schwarzschild horizon} if it is harmonic, i.e., $\Delta_{HdR} \zx =0$. 
\end{definition}  
%
%

\section{Concluding Remarks}
In this note, we have developed the fundamental elements of Hodge theory on a Carrollian $\R^\times$-bundle $(P, g,  \Phi)$. Specifically, we constructed the Hodge star operator, de Rham codifferential and Hodge--de Rham Laplacian. These structures can naturally be transplanted to the associated line bundle $L$ via the smooth inclusion $P \hookrightarrow L$. However, the Hodge star operator becomes degenerate at $t =0$, thus, care is required with the derived (maybe local) equations. \par 
None-the-less, the construction of a Hodge star operator allows for a version of electromagnetism to be geometrically formulated on a Carrollian $\R^\times$-bundle. The specific case of working on $P = \R^3\times \R^\times$ and the trivial connection,  the theory closely parallels the standard formulation of electromagnetism on a curved spacetime. An important property of the derived Carrollian Maxwell equations is that they can be extended to include $t=0$ using local coordinates. Interestingly, the Carrollian Maxwell equations take on the familiar form when using ``logarithmic time''. This is very suggestive that $u = \ln |t|$ is the physically relevant temporal degree of freedom in intrinsic Carrollian geometry. \par  
As demonstrated, via the example of the event horizon of a black hole, it is possible to have -- under some mild conditions -- a well-defined Hodge--de Rham Laplacian for all $t$ is possible. In particular, differential forms must satisfy a  natural regularity condition to ``absorb'' the problem of the connection being ill-defined near the zero section. \par

%
%

%
\end{document}